\newcommand\blfootnote[1]{%
  \begingroup
  \renewcommand\thefootnote{}\footnote{#1}%
  \addtocounter{footnote}{-1}%
  \endgroup
}
\newtheorem{theorem}{Theorem}[section]
\newtheorem{proposition}{Proposition}[section]
\numberwithin{equation}{section}
\title{Control properties for heat equation with double singular potential}
\date{}
\author{
Nikolai Kutev\thanks{Institute of Mathematics and Informatics, Bulgarian Academy of Sciences, 1113, Sofia, Bulgaria}
 \and Tsviatko Rangelov
 \footnotemark[1]
}
\begin{document}

\maketitle
\blfootnote{Corresponding author: T. Rangelov, rangelov@math.bas.bg}

\begin{abstract}
\noindent
The aim of this article is to study the noncontrollability of the heat equation with double singular potential at an interior point and on the boundary of the domain.
\end{abstract}


{\bf Keywords} Singular parabolic equations, Noncontrollability, Hardy inequalities.

{\bf Math. Subj. Class.} 26D10, 35P15

\section{Introduction}
\label{sec1}
We suppose that $\Omega$ is a star-shaped domain with respect to a ball centered at the origin, i. e.,
\begin{equation}
\label{eq1}
\Omega=\{x\in\mathbb{R}^n, n\geq3,  |x|<\varphi(x)\}, \ \ \partial\Omega=\{x: |x|=\varphi(x)\},
\end{equation}
where $\varphi(x)$ is a positive homogeneous function of 0-th  order, $\varphi(x)\in C^{0,1}(\Omega)$.

Let us consider the singular parabolic problem
\begin{equation}
\label{eq2}
\left\{\begin{array}{ll}
&u_t-\Delta u-\mu \Psi(x) u=f(t,x), \ \  f(t,x)\in L^2((0,T)\times\Omega),
\\
&u(t,x)=0 \ \ \hbox{ for }  (t,x)\in(0,T)\times\partial\Omega,
\\
&u(0,x)=u_0(x)\ \  u_0(x)\in L^2(\Omega),
\end{array}\right.
\end{equation}
where the  potential
\begin{equation}
\label{eq3}
\Psi(x)=|x|^{-2}\left[1-|x|^{n-2}\varphi^{2-n}(x)\right]
\end{equation}
is singular at the origin of the domain $\Omega$ and on the whole boundary $\partial\Omega$.

In the pioneering paper \citet{BG84} it is proved that for $\Omega\subset\mathbb{R}^n, 0\in\Omega, n\geq3$, problem \eqref{eq2} with $\Psi(x)=|x|^{-2}$ is well posed for $\mu\leq\left(\frac{n-2}{2}\right)^2$ and has a global solution.  However, for $\mu>\left(\frac{n-2}{2}\right)^2$, $u_0>0$ and $f\geq0$, problem \eqref{eq2} is ill-posed, i.e., there is complete instantaneous blow-up, see \citet{CM99}.

The motivation for the investigations of the above problem is the applications in the quantum mechanics, for example in \citet{Ca04}, where this model is derived to analyze the confinement of neutral fermions, see also \citet{PV95}. Other applications appear in molecular physics \citet{Le67}, in quantum cosmology \citet{BE97}, electron capture problems \citet{GGMS08}, porous medium of fluid \citet{AG04}.

The results in \citet{BG84} are extended in different directions, for example for general positive singular potentials, equations with variable coefficients, the asymptotic behaviour of the solutions, semilinear equations ect., see \citet{AA98, VZ00, GGR12, JYP12, Va11}.

Most of the studies of controlability theory  are in the case of interior singularities, see \citet{Va11, Er08, VZ08}. The threshold for controllability or noncontrollability of \eqref{eq2} is the optimal constant $\left(\frac{n-2}{2}\right)^2$ for  $\Psi(x)=|x|^{-2}$ in the corresponding Hardy inequality. The boundary controllability of \eqref{eq2} in \citet{Ca12}  is proved for $\mu\leq\frac{n^2}{4}$, where  $\frac{n^2}{4}$ is the optimal constant in the Hardy inequality when the potential is singular at a boundary point, i.e., $0\in\partial\Omega$.

Finally, let us mention the result in \citet{BZ16} for the potential
$$
\Psi(x)=d^{-2}(x), d(x)=\hbox{dist}(x,\partial\Omega), \Omega\subset\mathbb{R}^n, n\geq3
$$
which is singular on the whole boundary $\partial\Omega$. The authors prove existence of a unique global weak solution of \eqref{eq2} for $\mu\leq\frac{1}{4}$ where  $\frac{1}{4}$ is the optimal Hardy constant. When $\mu>\frac{1}{4}$ then there is no control which means that the blow-up, phenomena cannot be prevented, see Theorem 5.1 in \citet{BZ16}.

In the present paper we consider the case of potential \eqref{eq3} singular at an interior point and on the whole boundary of the domain $\Omega\subset\mathbb{R}^n$, $n\geq3$. We prove existence of a global weak solution for $\mu<\left(\frac{n-2}{2}\right)^2$ and boundary noncontrollability of \eqref{eq2} for $\mu>\left(\frac{n-2}{2}\right)^2$, where $\left(\frac{n-2}{2}\right)^2$ is the optimal constant in Hardy inequality, see  \eqref{eq4} below.
\section{Preliminaries}
\label{sec2}
We recall Hardy inequality for the double singular potential \eqref{eq3}.
\begin{theorem}
\label{th1}
Suppose $\Omega\subset\mathbb{R}^n$, $n\geq3$, $0\in\Omega$ and $\Omega$ is a star-shaped domain with respect to a ball centered at the origin satisfying \eqref{eq1}. For every $u(x)\in H^1_0(\Omega)$ the inequality
\begin{equation}
\label{eq4}
\int_\Omega|\nabla u|^2dx\geq \left(\frac{n-2}{2}\right)^2\int_\Omega\frac{|u|^2}{|x|^2|1-|x|^{n-2}\varphi^{2-n}(x)|^2}dx,
\end{equation}
holds. The constant $\left(\frac{n-2}{2}\right)^2$ is optimal.
\end{theorem}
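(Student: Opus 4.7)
The plan is to reduce the multi-dimensional inequality \eqref{eq4} to a one-dimensional weighted Hardy inequality along each ray through the origin, and then prove that 1D inequality by the ground state substitution. Since $\varphi$ is $0$-homogeneous, in spherical coordinates $x = r\omega$ the domain is $\Omega = \{(r,\omega): 0 < r < \varphi(\omega)\}$, and the weight in \eqref{eq4} depends on $x$ only through $r$ and $L := \varphi(\omega)$. Using $|\nabla u|^2 \geq u_r^2$ and Fubini, the claim reduces to proving, for each direction $\omega \in S^{n-1}$,
$$\int_0^L r^{n-1}\, u_r^2\, dr \;\geq\; \Big(\tfrac{n-2}{2}\Big)^2 \int_0^L \frac{r^{n-3}\, u^2}{(1-(r/L)^{n-2})^2}\, dr, \qquad u(L) = 0.$$

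The successive substitutions $s = r/L$, $t = s^{n-2}$, $\tau = 1 - t$ (so that $\tau = 0$ corresponds to $\partial\Omega$ and $\tau = 1$ to the origin) convert this to
$$\int_0^1 (1-\tau)^2\, u_\tau^2\, d\tau \;\geq\; \tfrac{1}{4} \int_0^1 \frac{u^2}{\tau^2}\, d\tau, \qquad u(0) = 0.$$
I would prove this by the ground state method: seek $\phi(\tau) > 0$ satisfying $-((1-\tau)^2 \phi')' = \phi/(4\tau^2)$ and verify that $\phi(\tau) = \sqrt{\tau/(1-\tau)}$ works, together with the convenient identity $(1-\tau)^2 \phi \phi' \equiv \tfrac{1}{2}$. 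Substituting $u = \phi v$, expanding $u_\tau^2$, and integrating the cross term by parts yields
$$\int_0^1 (1-\tau)^2 u_\tau^2\, d\tau \;=\; \tfrac{1}{4} \int_0^1 \frac{u^2}{\tau^2}\, d\tau \;+\; \int_0^1 (1-\tau)^2 \phi^2\, v_\tau^2\, d\tau \;+\; \tfrac{1}{2}\big[v^2\big]_0^1.$$
The middle integral is manifestly nonnegative, and for $u \in C^\infty_c(\Omega \setminus \{0\})$ the boundary term vanishes. Since $\{0\}$ has zero capacity for $n \geq 3$, $C^\infty_c(\Omega \setminus \{0\})$ is dense in $H^1_0(\Omega)$, and the 1D bound extends to all such $u$; integrating in $\omega \in S^{n-1}$ yields \eqref{eq4}.

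For the sharpness of $((n-2)/2)^2$, I would test with $u_\varepsilon(x) = \eta(x)\,|x|^{-(n-2)/2 + \varepsilon}$ for $\varepsilon \to 0^+$, where $\eta$ is a smooth cutoff supported in a small ball around the origin; on that region $1-(|x|/\varphi)^{n-2}$ is close to $1$, so both integrals in \eqref{eq4} are dominated by their behavior near the origin, and the ratio of the two sides tends to $((n-2)/2)^2$. The main obstacle is identifying the correct 1D ground state $\phi(\tau) = \sqrt{\tau/(1-\tau)}$, which must simultaneously carry the singularities at the origin ($\tau = 1$) and on $\partial\Omega$ ($\tau = 0$), and carefully tracking the boundary contributions from both endpoints after the integration by parts.
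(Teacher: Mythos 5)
Your proposal is correct, but it is a genuinely different (in fact, far more informative) route than the paper's: the paper's entire proof of Theorem \ref{th1} is a citation of Theorems 1.1 and 1.2 of \citet{FKR13}, specialized to the parameters $\alpha=\beta=1$, $p=2$, $k=n-2$, $g(s)=\frac{1}{n-2}(1-s^{n-2})$, whereas you supply a self-contained argument. Your reduction is sound: the $0$-homogeneity of $\varphi$ makes the weight a function of $r/\varphi(\omega)$ alone, $|\nabla u|^2\geq u_r^2$ plus Fubini reduces \eqref{eq4} to the radial inequality, and the chain of substitutions $s=r/L$, $t=s^{n-2}$, $\tau=1-t$ does land exactly on $\int_0^1(1-\tau)^2u_\tau^2\,d\tau\geq\frac14\int_0^1 u^2\tau^{-2}\,d\tau$. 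I checked the key identities: for $\phi=\sqrt{\tau/(1-\tau)}$ one has $(1-\tau)^2\phi\phi'\equiv\frac12$ (so the cross term integrates to a pure boundary contribution, which vanishes for $u\in C^\infty_c(\Omega\setminus\{0\})$) and $(1-\tau)^2(\phi')^2v^2=\frac14 u^2\tau^{-2}$, which gives your displayed decomposition; the extension to $H^1_0(\Omega)$ by zero capacity of the point plus Fatou is standard, as is the sharpness computation with $|x|^{-(n-2)/2+\varepsilon}$ cut off near the origin, where the weight is bounded below by $|x|^{-2}$ so the classical Hardy ratio controls the optimal constant from above. What each approach buys: the paper's citation is short and inherits the generality of the double-singular-kernel framework of \citet{FKR13} (general $p$, general profile $g$, weights $v,w$), while your proof is elementary, makes explicit the ground state $\sqrt{\tau/(1-\tau)}$ that simultaneously carries the interior and boundary singularities, and exhibits the nonnegative remainder $\int(1-\tau)^2\phi^2v_\tau^2\,d\tau$, which also explains the paper's remark that equality in \eqref{eq4} is attained only for $u\equiv0$.
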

\begin{proof}
The proof follows from Theorem 1.1 in \citet{FKR13} for special choice of the parameters $\alpha=1$, $\beta=1$, $p=2$ and hence $\gamma=\frac{1}{2}$, $k=n-2$, $g(s)=\frac{1}{n-2}\left(1-s^{n-2}\right)$, $v(x)=1$, $w(x)=(n-2)|x|^{-1}\left(1-|x|\varphi^{-1}(x)\right)^{-1}$. The optimality of the constant $\left(\frac{n-2}{2}\right)^2$ is proved in Theorem 1.2 in \citet{FKR13} which means that it can not be replaced with a greater one. However, equality in \eqref{eq4} is not achieved for any $u(x)\in H^1_0(\Omega)$, except in the trivial case $u(x)=0$.
\end{proof}
For $\mu<\left(\frac{n-2}{2}\right)^2$ problem \eqref{eq2} with right hand side $f(t,x)\in L^2((0,T)\times\Omega)$ has a global solution for every $t>0$ by means of Hardy inequality \eqref{eq4}.
\begin{theorem}
\label{th11}
Suppose $\Omega=\{|x|<\varphi(x)\}\subset \mathbb{R}^n$, $n\geq3$, is a star-shaped domain with respect to a small ball centered at the origin. Then if $\mu<\left(\frac{n-2}{n}\right)^2$,  problem \eqref{eq2} with $\Psi(x)$ given by \eqref{eq3} has a global solution $u(t,x)$, such that
\begin{equation}
\label{eq400}
u(t,x)\in L^\infty\left([0,\tau),L^2(\Omega)\right)\cup L^2\left((0,\tau), W^{1,2}(\Omega)\right), \ \ \hbox{ for all } \tau>0.
\end{equation}
\end{theorem}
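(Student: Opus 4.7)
The plan is a Faedo--Galerkin construction in which the strict inequality $\mu<((n-2)/2)^2$ is used, together with Theorem~\ref{th1}, to produce a coercive bilinear form. I would take a Hilbert basis $\{w_k\}$ of $H_0^1(\Omega)$ (for instance, the Dirichlet--Laplace eigenfunctions) and look for $u_m(t,x)=\sum_{k=1}^m g^m_k(t)w_k(x)$ satisfying the projection
\begin{equation*}
(u_m',w_k)_{L^2}+(\nabla u_m,\nabla w_k)_{L^2}-\mu\!\int_\Omega\!\Psi(x)\,u_m w_k\,dx=(f,w_k)_{L^2},\quad u_m(0)=P_m u_0,
\end{equation*}
for $1\le k\le m$. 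Because $0\le \Psi(x)\le|x|^{-2}(1-|x|^{n-2}\varphi^{2-n}(x))^{-2}=:W(x)$ in $\Omega$ (as $1-|x|^{n-2}\varphi^{2-n}(x)\in[0,1]$) and $\int_\Omega W|w|^2\,dx<\infty$ for every $w\in H_0^1(\Omega)$ by \eqref{eq4}, the matrix entries $\int_\Omega \Psi w_j w_k\,dx$ are well defined and the resulting linear ODE system is globally solvable on any $[0,T]$.

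The heart of the proof is a uniform energy estimate. Testing with $u_m$ yields
\begin{equation*}
\tfrac12\tfrac{d}{dt}\|u_m\|_{L^2}^2+\|\nabla u_m\|_{L^2}^2-\mu\!\int_\Omega\!\Psi|u_m|^2\,dx=\int_\Omega fu_m\,dx,
\end{equation*}
and since $\Psi\le W$, Hardy's inequality \eqref{eq4} gives
\begin{equation*}
\|\nabla u_m\|_{L^2}^2-\mu\!\int_\Omega\!\Psi|u_m|^2\,dx\ge\Bigl(1-\frac{\mu}{((n-2)/2)^2}\Bigr)\|\nabla u_m\|_{L^2}^2=:\delta\,\|\nabla u_m\|_{L^2}^2,
\end{equation*}
with $\delta>0$ under the hypothesis. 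Absorbing the right-hand side by Cauchy--Schwarz, Poincar\'e, and Young, and applying Gronwall's lemma, one obtains $m$-independent bounds
\begin{equation*}
\sup_{t\in[0,\tau]}\|u_m(t)\|_{L^2}^2+\delta\!\int_0^\tau\!\|\nabla u_m\|_{L^2}^2\,dt\le C\bigl(\|u_0\|_{L^2}^2+\|f\|_{L^2((0,\tau)\times\Omega)}^2\bigr).
\end{equation*}

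Passage to the limit then follows by extracting a weakly/weak-$\ast$ convergent subsequence in the spaces appearing in \eqref{eq400}; the singular term converges because $\Psi|u_m|^2$ is dominated in $L^1((0,\tau)\times\Omega)$ by the $m$-uniform $L^1$-bound on $W|u_m|^2$ coming from \eqref{eq4}. Uniqueness follows by applying the same identity to the difference of two solutions with $u_0=0$ and $f=0$. The step I expect to demand the most care is the handling of the doubly-singular quadratic form $\int_\Omega\Psi|u|^2\,dx$, both to guarantee its finiteness for $u\in H^1_0(\Omega)$ and to preserve the strict sign of the associated form throughout the limit procedure; this is exactly what Theorem~\ref{th1} provides when $\mu<((n-2)/2)^2$, and the whole argument collapses in the borderline case because the Hardy constant is not attained.
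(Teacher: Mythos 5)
Your proof is correct in substance, but it takes a genuinely different route from the paper. The paper does not work with the singular potential directly: it truncates, setting $\Psi_N=\min\{N,\Psi\}$, solves the regular problems \eqref{eq401}, derives the same Hardy-based energy estimate uniformly in $N$ (testing with $u_N$ and absorbing $\mu\int\Psi_N u_N^2$ via \eqref{eq4}, exactly your coercivity step with $C=\left(\frac{n-2}{2}\right)^2$), and then passes to the limit $N\to\infty$ using the monotonicity of $u_N$ in $N$ given by the comparison principle together with the convergence theorem of \citet{BM92}. You instead run Faedo--Galerkin (equivalently, Lions' abstract theorem) on the singular problem itself, using \eqref{eq4} to show that the form $a(u,v)=\int\nabla u\cdot\nabla v-\mu\int\Psi uv$ is both bounded and coercive on $H_0^1(\Omega)$ when $\mu<\left(\frac{n-2}{2}\right)^2$. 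That buys you uniqueness and continuity in time for free and avoids approximating the potential, whereas the truncation route is the more robust one in this literature (it survives semilinear perturbations and potentials for which the form is only bounded below). Two remarks. First, your justification of the limit in the singular term is loose: an $m$-uniform $L^1$ bound on $\Psi|u_m|^2$ does not by itself give convergence of $\int\!\!\int\Psi u_m w_k$. The clean argument is that $\Psi^{1/2}u_m$ is bounded in $L^2((0,\tau)\times\Omega)$, its weak limit is identified as $\Psi^{1/2}u$ by localizing away from the singular set where $\Psi$ is bounded, and then one pairs against $\Psi^{1/2}w_k\in L^2(\Omega)$, which is again supplied by \eqref{eq4}. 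Second, note that both your argument and the paper's prove the result under $\mu<\left(\frac{n-2}{2}\right)^2$; the exponent $\left(\frac{n-2}{n}\right)^2$ in the statement (and the literal exponent in \eqref{eq3}, and the union in \eqref{eq400}) appear to be typographical slips, since the proof in the paper uses $m=\inf_\Omega\Psi>0$, which forces the doubly singular form $\Psi=|x|^{-2}\left(1-|x|^{n-2}\varphi^{2-n}(x)\right)^{-2}$ matching the Hardy weight in \eqref{eq4}.
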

\begin{proof} For the readers convenience we sketch the proof.
We consider the truncated function $\Psi_N(x)=\min\{N, \Psi(x)\}, N=1,2 \ldots$. Let $u_N(t,x)$ be the solution of the truncated problem
\begin{equation}
\label{eq401}
\left\{\begin{array}{ll}
&u_{N,t}-\Delta u_N=\mu \Psi_N(x) u_N+f(t,x), \ \  t>0, x\in\Omega,
\\
&u_N(t,x)=0, \ \ \hbox{ for } t>0,  x\in\partial\Omega,
\\
&u_N(0,x)=u_0(x),\ \ \hbox{ for } x\in\Omega.
\end{array}\right.
\end{equation}
Multipying the equation in \eqref{eq401} with $u_N$ and integrating by parts we get from Hardy's inequality \eqref{eq4} the following estimates for every $T>0$,
 see Theorem 4.1 in \citet{AA98}
\begin{equation}
\label{eq402}
\begin{array}{ll}
&\int_{\Omega}|u_N(T,x)|^2dx+\int_0^T\int_{\Omega}|\nabla u_N(t,x)|^2dxdt
\\&=\int_{\Omega}u_0^2(x)dx+\frac{\mu+C}{2}\int_0^T\int_{\Omega}\Psi_N(x)u_N^2(t,x)dxdt
\\&-\frac{C-\mu}{2}\int_0^T\int_{\Omega}\Psi_N(x)u_N^2(t,x)dxdt+\int_0^T\int_{\Omega}f(t,x)u_N(t,x)dxdt
\\&\leq\int_{\Omega}u_0^2(x)dx+\frac{\mu+C}{2C}\int_0^T\int_{\Omega}|\nabla u_N(x,t)|^2dxdt
-\frac{m(C-\mu)}{2}\int_0^T\int_{\Omega}u_N^2(t,x)dxdt
\\&+\frac{m(C-\mu)}{2}\int_0^T\int_{\Omega}u_N^2(t,x)dxdt+\frac{1}{2m(C-\mu)}\int_0^T\int_{\Omega}f^2(t,x)dxdt,
\end{array}
\end{equation}
where $m=\inf_{x\in\Omega}\Psi(x)>0$ and $C=\left(\frac{n-2}{2}\right)^2$. Since $\mu <C$ we get from \eqref{eq402} the energy estimate
$$\begin{array}{ll}
&\int_{\Omega}|u_N(x,T)|^2dx+\frac{C-\mu}{2C}\int_0^T\int_{\Omega}|\nabla u_N(x,t)|^2dxdt
\\
&\leq \int_{\Omega}|u_0(x)|^2dx+\frac{1}{2m(C-\mu)}\int_0^T\int_{\Omega}f^2(t,x)dxdt.
\end{array}
$$
From the comparison principle $u_N(t,x)$ is a nondecreasing sequence of functions because $\Psi_N(x)\geq \Psi_M(x)$ for every $x\in\Omega$, $t>0$ and $N\geq M$. We can pass to the limit $N\rightarrow\infty$ by using Theorem 4.1 in \citet{BM92}. Thus the global solution $u(t,x)$ of \eqref{eq2} is defined as a limit of the solution $u_N(t,x)$ of the truncated problem \eqref{eq401} and $u(t,x)$ has the regularity properties given in \eqref{eq400}.
\end{proof}
Thus the natural question is weather  $\left(\frac{n-2}{2}\right)^2$ is the sharp constant for global existence of the solutions to \eqref{eq2}. In the present paper we give more precise answer. When $\mu>\left(\frac{n-2}{2}\right)^2$ we prove nullnoncontrollability of \eqref{eq2}, i.e., it is not possible for given $u_0(x)\in L^2(\Omega)$ one to find a control function $f(t,x)\in L^2((0,T)\times\Omega)$ localized in $\omega\subset\Omega$ such that there exists a solution of \eqref{eq2}. In this way we can not prevent the blow-up phenomena acting on a subset for $\mu>\left(\frac{n-2}{2}\right)^2$.

We recall also the classical Hardy inequality
\begin{equation}
\label{eq200}
\int_\Omega|\nabla u|^2dy\geq \left(\frac{n-2}{2}\right)^2\int_\Omega\frac{u^2}{|y|^2}dy.
\end{equation}
for every $u\in H_0^1(\Omega)$ in a bounded domain $\Omega\subset\mathbb{R}^n, n\geq3$ and optimal constant $\left(\frac{n-2}{2}\right)^2$, see \citet{HPL52}.

\section{Main result}
\label{sec3}
In Theorem \ref{th2} below we prove that problem \eqref{eq2} cannot be stabilized due to the explosive modes concentrated around the singularities when $\mu>\left(\frac{n-2}{2}\right)^2$. For this purpose, following the idea of optimal control, see \citet{Er08}, we consider for any  $u_0\in L^2(\Omega)$ the functional
$J_{u_0}(u,f)=\frac{1}{2}\int_{\Omega\times(0,T)}u^2(t,x)dxdt+\frac{1}{2}\int_0^T\|f\|_{H^{-1}(\Omega)}dt$
defined in the set
$D(u_0)=\left\{(u,f)\in L^2\left((0,T);H^1_0(\Omega)\right)\times L^2\left((0,T);H^{-1}(\Omega)\right)\right\}$
where $u(t,x)$ satisfies \eqref{eq2}. Here $f(t,x)$ is the control which is null in $\Omega\backslash\bar{\omega}$, $\omega\Subset\Omega\backslash\{0\}$.

We say that \eqref{eq2} can be stabilized if there exists a constant $C_0$ such that
$$
\inf_{(u,f)\in D(u_0)}J_{u_0}(u,f)\leq C_0\|u_0\|^2_{L^2(\Omega)}, \hbox{ for every } u_0\in L^2(\Omega).
$$
Let us consider the regularized problem
\begin{equation}
\label{eq7}
\left\{\begin{array}{ll}
&u_t-\Delta u-\mu \Psi_\varepsilon u=f(t,x), \ \ \hbox{ for } (t,x)\in(0,T)\times\Omega,
\\
&u(t,x)=0 \ \ \hbox{ for }   (t,x)\in (0,T)\times\partial\Omega,
\\
&u(0,x)=u_0(x)\ \ \hbox{ for } x\in\Omega,
\end{array}\right.
\end{equation}
where
\begin{equation}
\label{eq8}
\Psi_\varepsilon(x)=\left(|x|+\varepsilon\right)^{-2}\left(1+\varepsilon-|x|^{n-2}\varphi^{2-n}(x)\right)^{-2}.
\end{equation}
For every $\varepsilon>0$ problem \eqref{eq7} is well-posed. For the functional
$$
J^\varepsilon_{u_0}(f)=\frac{1}{2}\int_{(0,T)\times\Omega}u^2(t,x)dtdx+\frac{1}{2}\int_0^T\|f\|_{H^{-1}(\Omega)}dt
$$
where $f$ is localized in $\omega\Subset\Omega\backslash\{0\}$ and $u$ is solution of \eqref{eq7} we have the following result.
\begin{theorem}
\label{th2}
Suppose $\mu>\left(\frac{n-2}{2}\right)^2$, $\omega\Subset\Omega\backslash\{0\}$, $n\geq3$ and $f$ is localized in $\omega$. Then there is no constant $C_0$ such that for all $\varepsilon>0$ and  $u_0\in L^2(\Omega)$
$$
\inf_{f\in D_1(f)}J^\varepsilon_{u_0}(f)\leq C_0\|u_0\|^2_{L^2(\Omega)}
$$
where $D_1(f)=\left\{f\in L^2\left((0,T);H^{-1}(\Omega)\right)\right\}$.
\end{theorem}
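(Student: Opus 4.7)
My plan is to prove the negative bound by a spectral-duality argument, in the spirit of Ervedoza and of Vazquez--Zuazua. First, the stabilization estimate $\inf_f J^\varepsilon_{u_0}(f) \leq C_0\|u_0\|_{L^2}^2$ is, by duality, equivalent to an observability-type inequality for the adjoint state $\varphi$ solving the backward heat equation with the same potential $\mu\Psi_\varepsilon$, of the form
\[
\|\varphi(0,\cdot)\|_{L^2(\Omega)}^2 \;\leq\; C \int_0^T \!\bigl(\|\varphi(t,\cdot)\|_{L^2(\Omega)}^2 + \|\varphi(t,\cdot)\|_{H^1_0(\omega)}^2\bigr)\,dt.
\]
Hence it is enough to produce, for each prospective constant $C_0$, some $\varepsilon>0$ and some $u_0\in L^2(\Omega)$ for which the ratio $\inf_f J^\varepsilon_{u_0}(f)/\|u_0\|_{L^2}^2$ exceeds $C_0$; the natural way is to let $\varepsilon\to 0$.

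The second step is to use the principal Dirichlet eigenpair $(\lambda_1^\varepsilon,\phi_1^\varepsilon)$ of the regularized operator $L_\varepsilon := -\Delta -\mu \Psi_\varepsilon$ on $\Omega$, normalized by $\|\phi_1^\varepsilon\|_{L^2}=1$. The crucial claim is that $\lambda_1^\varepsilon \to -\infty$ as $\varepsilon\to 0$ whenever $\mu>\left(\tfrac{n-2}{2}\right)^2$. I would test the Rayleigh quotient
\[
\lambda_1^\varepsilon \;=\; \inf_{u\in H^1_0(\Omega)}\frac{\int_\Omega |\nabla u|^2\,dx - \mu\int_\Omega \Psi_\varepsilon u^2\,dx}{\int_\Omega u^2\,dx}
\]
with a family of truncated (quasi-)extremals of the Hardy inequality \eqref{eq4}, i.e.\ approximate minimizers $u_\delta$ for which $\int|\nabla u_\delta|^2$ is close to $\left(\tfrac{n-2}{2}\right)^2\int \Psi u_\delta^2$ while $\Psi$ can be replaced by $\Psi_\varepsilon$ up to small error for $\varepsilon$ sufficiently small relative to $\delta$. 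Since $\mu>\left(\tfrac{n-2}{2}\right)^2$, the numerator is driven to $-\infty$ while the denominator stays controlled, proving the divergence. The main technical point is carrying out this approximation simultaneously at both singular sets $\{0\}$ and $\partial\Omega$; here the explicit form of the Hardy extremals built into Theorem \ref{th1} via $g(s)=\tfrac{1}{n-2}(1-s^{n-2})$ is the natural candidate after suitable cut-off.

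Third, I would plug $u_0 := \phi_1^\varepsilon$ into the functional. The free solution of \eqref{eq7} with $f\equiv 0$ is $u^{\rm free}(t,x)=e^{-\lambda_1^\varepsilon t}\phi_1^\varepsilon(x)$, whose $L^2((0,T)\times\Omega)$-norm is of order $|\lambda_1^\varepsilon|^{-1/2}e^{-\lambda_1^\varepsilon T}$ and blows up as $\varepsilon\to 0$. Any admissible control $f$ localized in $\omega\Subset\Omega\setminus\{0\}$ modifies the $\phi_1^\varepsilon$-mode only through the pairing $\langle f(t,\cdot),\phi_1^\varepsilon\rangle_{H^{-1}\times H^1_0}$; by Cauchy--Schwarz this is bounded by $\|f\|_{H^{-1}(\Omega)}\,\|\phi_1^\varepsilon\|_{H^1_0(\omega)}$, while the eigenfunction concentrates near the singular set $\{0\}\cup\partial\Omega$ so its $H^1_0(\omega)$-mass remains small. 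One then balances: either $\int_0^T\!\int_\Omega u^2$ keeps the explosive profile and is huge, or $\int_0^T\|f\|_{H^{-1}}^2$ must be huge to cancel it; in both cases $J_{u_0}^\varepsilon(f)\to\infty$ while $\|u_0\|_{L^2}^2=1$, contradicting any fixed $C_0$.

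The principal obstacle I expect is the quantitative concentration estimate for $\phi_1^\varepsilon$: I need a bound of the form $\|\phi_1^\varepsilon\|_{H^1_0(\omega)}^2 = o(|\lambda_1^\varepsilon|)$, so that the $f$-cost necessary to damp the mode outgrows any linear multiple of $\|u_0\|_{L^2}^2$. This can be obtained either via an Agmon-type exponential decay of eigenfunctions away from the singularity set, or, more elementarily, by revisiting the Rayleigh-quotient construction of $\phi_1^\varepsilon$ and verifying that the approximate extremals already concentrate on a shrinking neighborhood of $\{0\}\cup\partial\Omega$ disjoint from $\overline{\omega}$. Given such a concentration bound, the duality contradiction closes, and the theorem follows.
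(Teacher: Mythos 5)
Your overall architecture coincides with the paper's: both arguments hinge on the regularized eigenpair $(\lambda_1^\varepsilon,\phi_1^\varepsilon)$, the divergence $\lambda_1^\varepsilon\to-\infty$, the choice $u_0=\phi_1^\varepsilon$, the projection $a(t)=\int_\Omega u\,\phi_1^\varepsilon\,dx$ solving $a'+\lambda_1^\varepsilon a=\langle f,\phi_1^\varepsilon\rangle$, and the bound $|\langle f,\phi_1^\varepsilon\rangle|\leq\|f\|_{H^{-1}}\|\phi_1^\varepsilon\|_{H^1(\omega)}$ to show that either the state cost or the control cost explodes. (Your opening duality/observability reformulation is never actually used; like the paper, you end up arguing directly by exhibiting a bad $u_0$.) For the eigenvalue divergence you propose testing the Rayleigh quotient with truncated Hardy quasi-extremals, whereas the paper argues by contradiction: assuming $\lambda_1^\varepsilon\geq C_1$, it applies the dilation $u_a=a^nu(ax)$ and lets $\varepsilon\to0$, $a\to\infty$ to deduce $\int|\nabla u|^2\geq\mu\int|x|^{-2}u^2$, contradicting optimality of the classical Hardy constant \eqref{eq200}. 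Your route is viable, but note that ``the denominator stays controlled'' is not enough: you must normalize the quasi-extremals so that $\int_\Omega u_\delta^2$ stays bounded while $\int_\Omega\Psi u_\delta^2\to\infty$ (i.e.\ they must concentrate at a singular set), otherwise you only obtain $\lambda_1^\varepsilon\leq-c<0$ rather than $\lambda_1^\varepsilon\to-\infty$.

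The genuine gap is the concentration estimate $\|\phi_1^\varepsilon\|_{H^1(\omega)}\to0$ (the paper's \eqref{eq14}), which you correctly identify as the crux but do not prove, and one of your two proposed fixes is unsound: knowing that the \emph{test functions} realizing a very negative Rayleigh quotient concentrate near $\{0\}\cup\partial\Omega$ tells you nothing about where the \emph{actual} first eigenfunction lives --- an upper bound on the infimum by concentrated competitors does not localize the minimizer. The Agmon-decay alternative could in principle work but is not developed and is heavier than needed. The paper closes this step with an elementary localized energy identity: multiply \eqref{eq12} by $\eta\phi_1^\varepsilon$ with $\eta$ a cut-off equal to $1$ on $U_{\rho,\delta}\supset\omega$ and vanishing near both singular sets; since $\Psi_\varepsilon$ is uniformly bounded on $\mathrm{supp}\,\eta$, the identity \eqref{eq19} gives $-\lambda_1^\varepsilon\int_\Omega\eta(\phi_1^\varepsilon)^2\,dx\leq C(\rho,\delta,\eta)$, so $\lambda_1^\varepsilon\to-\infty$ forces $\int_{U_{\rho,\delta}}(\phi_1^\varepsilon)^2\to0$, and a second pass through the same identity on a slightly larger annulus kills the gradient term as well. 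Without this (or an equivalent) argument your plan does not close; with it, the final balancing step goes through exactly as you describe.
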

 In order to prove Theorem \ref{th2} we need the following spectral estimates for the operator
\begin{equation}
\label{eq11}
\left\{\begin{array}{ll}
&L^\varepsilon(u)\equiv -\Delta u-\mu \Psi_\varepsilon u, \ \ \hbox{ in } \Omega,
\\
&u=0 \ \ \hbox{ on }  \partial\Omega.
\end{array}\right.
\end{equation}
Let $\lambda_1^\varepsilon$ be the first eigenvalue of (\ref{eq11}), $\phi_1^\varepsilon(x)$ be the corresponding first eigenfunction, $\|\phi_1^\varepsilon(x)\|_{L^2(\Omega)}=1$, i.e.,
\begin{equation}\left\{\begin{array}{ll}
\label{eq12}
&-\Delta\phi^\varepsilon_1-\mu \Psi_\varepsilon(x)\phi^\varepsilon_1=\lambda^\varepsilon_1\phi^\varepsilon_1, x\in\Omega,
\\
&\phi^\varepsilon_1=0, x\in\partial\Omega,
\end{array}\right.
\end{equation}
and $\Psi_\varepsilon(x)$ is defined in (\ref{eq8}).
\begin{proposition}
\label{prop1}
Suppose $\mu>\left(\frac{n-2}{2}\right)^2$, $n\geq3$. Then we have
\begin{equation}
\label{eq13}
\lim_{\varepsilon\rightarrow0}\lambda_1^\varepsilon=-\infty
\end{equation}
and for all $\rho>0, \delta>0, \rho<(1-\delta)\varphi(x)$, $U_{\rho,\delta}=\{x:\rho<|x|<(1-\delta)\varphi(x)\}$,
\begin{equation}
\label{eq14}
\lim_{\varepsilon\rightarrow0}\|\phi_1^\varepsilon\|_{H^1(U_{\rho,\delta})}=0.
\end{equation}
\end{proposition}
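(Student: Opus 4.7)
The plan for \eqref{eq13} rests on the Rayleigh characterization
\begin{equation*}
\lambda_1^\varepsilon=\inf_{u\in H_0^1(\Omega),\,\|u\|_{L^2}=1}\left(\int_\Omega|\nabla u|^2\,dx-\mu\int_\Omega\Psi_\varepsilon u^2\,dx\right),
\end{equation*}
together with the sharpness yet non-attainment of the classical Hardy inequality \eqref{eq200}. The idea is to construct, for every $M>0$, a fixed test function whose Rayleigh quotient lies below $-M$. A convenient choice is the family of near-extremals concentrated at the origin, $v_\delta(x)=\chi(|x|)\,|x|^{-(n-2)/2+\delta}$, where $\chi\in C_c^\infty([0,r_0))$ is a radial cutoff and $r_0$ is small enough that $\mathrm{supp}(v_\delta)\subset\Omega$ and $\Psi$ is comparable to $|x|^{-2}$ there. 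A direct computation gives
\begin{equation*}
\int_\Omega|\nabla v_\delta|^2\,dx-\mu\int_\Omega\frac{v_\delta^2}{|x|^2}\,dx=\left[\Big(\tfrac{n-2}{2}-\delta\Big)^{2}-\mu\right]\int_\Omega\frac{v_\delta^2}{|x|^2}\,dx+O(1),
\end{equation*}
and since $\mu>(\tfrac{n-2}{2})^2$ while $\int|x|^{-2}v_\delta^2\,dx\sim\delta^{-1}\to\infty$ as $\delta\to0^+$, this quantity tends to $-\infty$. Using $\Psi(x)\ge|x|^{-2}$ (the boundary factor is $\ge 1$), the same quantity with $\Psi$ in place of $|x|^{-2}$ is $<-M\|v_\delta\|_{L^2}^2$ for $\delta$ small. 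Fixing such a $v_\delta$ and exploiting $\Psi_\varepsilon\uparrow\Psi$ via monotone convergence gives $\int\Psi_\varepsilon v_\delta^2\,dx\to\int\Psi v_\delta^2\,dx$ as $\varepsilon\to 0$, hence $\lambda_1^\varepsilon\le-M$ for all $\varepsilon$ sufficiently small, proving \eqref{eq13}.

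For \eqref{eq14} the idea is that the divergence $\lambda_1^\varepsilon\to-\infty$ forces $\phi_1^\varepsilon$ to concentrate on the singular set $\{0\}\cup\partial\Omega$. Fix a smooth cutoff $\eta\in C_c^\infty(\Omega\setminus\{0\})$, $0\le\eta\le 1$, with compact support $K$ disjoint from $\{0\}\cup\partial\Omega$. Testing \eqref{eq12} against $\eta^2\phi_1^\varepsilon$ and using the IMS identity $\int\nabla\phi_1^\varepsilon\cdot\nabla(\eta^2\phi_1^\varepsilon)\,dx=\int|\nabla(\eta\phi_1^\varepsilon)|^2\,dx-\int|\nabla\eta|^2(\phi_1^\varepsilon)^2\,dx$ yields the localized identity
\begin{equation*}
\int_\Omega|\nabla(\eta\phi_1^\varepsilon)|^2\,dx-\mu\int_\Omega\Psi_\varepsilon(\eta\phi_1^\varepsilon)^2\,dx=\lambda_1^\varepsilon\int_\Omega\eta^2(\phi_1^\varepsilon)^2\,dx+\int_\Omega|\nabla\eta|^2(\phi_1^\varepsilon)^2\,dx.
\end{equation*}
On $K$ the potential $\Psi_\varepsilon$ is bounded by some $K_1$ uniformly in $\varepsilon$, and Poincaré's inequality on $K$ provides a constant $M_0=M_0(K)$ independent of $\varepsilon$ with $\int|\nabla(\eta\phi_1^\varepsilon)|^2-\mu\int\Psi_\varepsilon(\eta\phi_1^\varepsilon)^2\ge -M_0\int\eta^2(\phi_1^\varepsilon)^2$. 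Combined with the identity, as soon as $|\lambda_1^\varepsilon|>M_0$ one obtains
\begin{equation*}
\int_\Omega\eta^2(\phi_1^\varepsilon)^2\,dx\le\frac{\int_\Omega|\nabla\eta|^2(\phi_1^\varepsilon)^2\,dx}{|\lambda_1^\varepsilon|-M_0}=O(|\lambda_1^\varepsilon|^{-1}).
\end{equation*}

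To promote this $L^2$-decay to $H^1$-decay on $U_{\rho,\delta}$, I would iterate along a nested family $\eta_0,\eta_1,\dots,\eta_N$ with $\eta_0\equiv 1$ on $U_{\rho,\delta}$ and $\mathrm{supp}(\nabla\eta_j)\subset\{\eta_{j+1}\equiv 1\}$, each compactly supported in $\Omega\setminus\{0\}$ and bounded away from $\partial\Omega$. Each round improves the decay by a factor $|\lambda_1^\varepsilon|^{-1}$, so $\int\eta_0^2(\phi_1^\varepsilon)^2\,dx=O(|\lambda_1^\varepsilon|^{-N})$. Reinserting $\eta=\eta_0$ in the localized identity and taking $N\ge 2$, every term on the right-hand side is $o(1)$: the $\lambda_1^\varepsilon$-term is $O(|\lambda_1^\varepsilon|^{-N+1})$, the bounded-potential term is $O(|\lambda_1^\varepsilon|^{-N})$, and $\int|\nabla\eta_0|^2(\phi_1^\varepsilon)^2$ is controlled by the $L^2$-mass on $\{\eta_1\equiv 1\}$, itself $O(|\lambda_1^\varepsilon|^{-N+1})$. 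Hence $\int|\nabla(\eta_0\phi_1^\varepsilon)|^2\,dx\to 0$, which combined with the $L^2$-estimate yields $\|\phi_1^\varepsilon\|_{H^1(U_{\rho,\delta})}\to 0$. The main obstacle is this quantitative balance: a single localization gives only $O(|\lambda_1^\varepsilon|^{-1})$, too weak to absorb the factor $\lambda_1^\varepsilon$ that reappears in the identity, and the argument succeeds precisely because nesting cutoffs produces superpolynomial decay of the mass outside the singularities.
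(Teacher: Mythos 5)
Your argument is correct, but it takes a genuinely different route from the paper's in the first half and an unnecessarily heavy one in the second. For \eqref{eq13} the paper argues by contradiction: assuming $\lambda_1^\varepsilon\geq C_1$, it inserts the rescaled functions $u_a(x)=a^nu(ax)$ into the Rayleigh bound, lets $\varepsilon\to0$ and then $a\to\infty$, and arrives at $\int_\Omega|\nabla u|^2dy\geq\mu\int_\Omega u^2|y|^{-2}dy$ for all $u\in C_0^\infty(\Omega)$, which contradicts the optimality of $\left(\frac{n-2}{2}\right)^2$ in the classical Hardy inequality \eqref{eq200}. You instead exhibit the near-extremals $v_\delta=\chi(|x|)\,|x|^{-(n-2)/2+\delta}$ explicitly and drive the Rayleigh quotient below any $-M$ directly, passing from $|x|^{-2}$ to $\Psi_\varepsilon$ via $\Psi_\varepsilon\uparrow\Psi\geq|x|^{-2}$ and monotone convergence; this is self-contained (it reproves the relevant optimality rather than citing it) at the cost of the explicit computation, and it implicitly reads the potential as $|x|^{-2}\bigl(1-|x|^{n-2}\varphi^{2-n}\bigr)^{-2}$, which is consistent with \eqref{eq4} and \eqref{eq8} though not with the literal \eqref{eq3}. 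For \eqref{eq14} both proofs localize with a cutoff vanishing near $0$ and near $\partial\Omega$ (the paper tests with $\eta\phi_1^\varepsilon$, you with $\eta^2\phi_1^\varepsilon$; immaterial), but your iteration over nested cutoffs $\eta_0,\dots,\eta_N$ to reach $O(|\lambda_1^\varepsilon|^{-N})$ decay is overkill: in the localized identity the term $\lambda_1^\varepsilon\int_\Omega\eta^2(\phi_1^\varepsilon)^2dx$ is \emph{negative} and can simply be dropped when bounding the gradient from above, so a single application of the $L^2$-decay on the slightly enlarged region $U_{\rho/2,\delta/2}$ (one level of nesting) already yields $\int_{U_{\rho,\delta}}|\nabla\phi_1^\varepsilon|^2dx\to0$ --- which is exactly what the paper does. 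Your bookkeeping treats the $\lambda_1^\varepsilon$-term by absolute value, which is what forces the superpolynomial decay; likewise the appeal to Poincar\'e's inequality is superfluous, since nonnegativity of $\int_\Omega|\nabla(\eta\phi_1^\varepsilon)|^2dx$ together with the uniform bound on $\Psi_\varepsilon$ over $\mathrm{supp}\,\eta$ is all that is used.
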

\begin{proof}
We assume by contradiction that $\lambda_1^\varepsilon$ is bounded from below with a constant $C_1$. Then from the Reyleigh identity it follows that
\begin{equation}
\label{eq15}
\mu\int_\Omega \Psi_\varepsilon(x)u^2dx\leq\int_\Omega|\nabla u|^2dx-C_1\int_\Omega u^2dx,
\end{equation}
for every $u\in H^1_0(\Omega)$. For every $a\geq1$ we define $u_a=a^nu(ax)$ so that (\ref{eq15}) becomes
\begin{equation}
\label{eq16}
\begin{array}{ll}
&\mu a^{2n}\int_\Omega u^2(ax)\Psi_\varepsilon(x)dx
\leq a^{2n+2}\int_\Omega |\nabla u(ax)|^2dx-C_1a^{2n}\int_\Omega u^2(ax)dx
\end{array}
\end{equation}
After the limit $\varepsilon\rightarrow0$ and then the change of variables $ax=y$ we get from (\ref{eq16})
\begin{equation}
\label{eq17}\begin{array}{ll}
&\mu a^{2}\int_{|y|<a\varphi(y)} u^2(y)|y|^{-2}\left(1-|y|^{n-2}(\varphi(y)a)^{2-n}\right)^{-2}dy
\\
&\leq a^2\int_{|y|<a\varphi(y)}|\nabla u(y)|^2dy-C_1\int_{|y|<a\varphi(y)}u^2(y)dy.
\end{array}
\end{equation}
For every $u\in C_0^\infty(\Omega)$ and for every fixed $y$ we have
$
u^2(y)|y|^{-2}\left(1-|y|^{n-2}(\varphi(y)a)^{2-n}\right)^{-2}\rightarrow_{a\rightarrow\infty}u^2(y)|y|^{-2}.
$
Hence $\hbox{supp }u\subset \{|y|<\delta\varphi(y)\}$ for some $\delta\in (0,1)$ and for every $a\geq1$ and every $y$ it follows
$$
u^2(y)|y|^{-2}\left(1-|y|^{n-2}(\varphi(y)a)^{2-n}\right)^{-2}\leq u^2(y)|y|^{-2}\left(1-\delta^{n-2}\right)^{-2}<\infty.
$$
Thus after the limit $a\rightarrow\infty$ in (\ref{eq17}) we get
\begin{equation}
\label{eq18}
\int_\Omega\nabla u(y)|^2dy\geq\mu\int_\Omega\frac{u^2(y)dy}{|y|^2},
\end{equation}
for every $u\in C_0^\infty(\Omega)$ and by the continuity (\ref{eq18}) holds for every $u\in H^1_0(\Omega)$.

Since $\mu>\left(\frac{n-2}{2}\right)^2$, inequality (\ref{eq18})  contradict the Hardy inequality \eqref{eq200} with optimal constant $\left(\frac{n-2}{2}\right)^2$ and
\eqref{eq13} is proved.

In order to prove (\ref{eq14}) let us consider a non-negative smooth function $\eta(x)$, such that $\|\eta\|_{L^\infty(R^n)}\leq1$
 and
 $$\left\{\begin{array}{ll}
 & \eta(x)=1, \ \ \hbox{for } x\in\{\rho<|x|<(1-\delta)\varphi(x)\}
 \\
 &\eta(x)=0, \ \ \hbox{in } \{|x|<\frac{\rho}{2}\}\cup \left\{\left(1-\frac{\delta}{2}\right)\varphi(x)<|x|<\varphi(x)\right\}.
 \end{array}\right.
 $$
Multiplying (\ref{eq12}) with $\eta\phi^\varepsilon_1(x)$ and integrating in $\Omega$ we get
\begin{equation}
\label{eq19}\begin{array}{ll}
&\int_\Omega\eta|\nabla\phi^\varepsilon_1|^2dx-\lambda^\varepsilon_1\int_\Omega\eta(\phi^\varepsilon_1)^2dx
=\mu\int_\Omega\eta \Psi_\varepsilon(x)(\phi^\varepsilon_1)^2dx+\frac{1}{2}\int_\Omega(\phi^\varepsilon_1)^2\Delta\eta dx.
\end{array}
\end{equation}
From (\ref{eq19}), the choice of $\eta$ and the unit $L^2$ norm of $\phi^\varepsilon_1$ it follows that
$$
-\lambda^\varepsilon_1\int_\Omega\eta(\phi^\varepsilon_1)^2dx\leq4\mu\rho^{-2}\left[1-\left(1-\frac{\delta}{2}\right)^{n-2}\right]^{-2}+\frac{1}{2}\|\Delta\eta\|_{L^\infty(\Omega)}.
$$
By means of (\ref{eq13}) we get $\lim_{\varepsilon\rightarrow0}\int_\Omega\eta(\phi^\varepsilon_1)^2dx=0$ and hence
\begin{equation}
\label{eq21}
\lim_{\varepsilon\rightarrow0}\int_{U_{\rho,\delta}}(\phi^\varepsilon_1)^2dx=0, \hbox{for every } U_{\rho,\delta}=\{\rho<|x|<(1-\delta)\varphi(x)\}.
\end{equation}
Now using (\ref{eq19}), (\ref{eq21}) for $U_{\frac{\rho}{2},\frac{\delta}{2}}=\left\{\frac{\rho}{2}<|x|<(1-\frac{\delta}{2})\varphi(x)\right\}$ it follows that
$$\begin{array}{lll}
&&\int_{U_{\rho,\delta}}|\nabla\phi^\varepsilon_1|^2dx\leq\int_\Omega\eta|\nabla\phi^\varepsilon_1|^2dx
\\
&&\leq\left[4\mu\rho^{-2}\left(1-\left(1-\frac{\delta}{2}\right)^{n-2}\right)^{-2}+\frac{1}{2}\|\Delta\eta\|_{L^\infty(\Omega)}\right]
\int_{U_{\frac{\rho}{2},\frac{\delta}{2}}}\left(\phi^\varepsilon_1\right)^2dx\rightarrow_{\varepsilon\rightarrow0}0,
\end{array}
$$
which proves (\ref{eq14}).
\end{proof}
\begin{proof}[Proof of Theorem \ref{th2}]
Due to (\ref{eq13}) we fixe $\varepsilon>0$ sufficiently small such that $\lambda_1^\varepsilon<0$ and choose $u_0=\phi^\varepsilon_1$, $\|\phi^\varepsilon_1\|_{L^2(\Omega)}=1$ where $\phi^\varepsilon_1$ is the first eigenfunction of (\ref{eq11}). Let us consider the functions
$
a(t)=\int_\Omega u(t,x)\phi^\varepsilon_1dx, b(t)=\langle f, \phi^\varepsilon_1\rangle_{L^2(\Omega)}.
$
Simple computations give us
$$
\begin{array}{ll}
&a'(t)=\int_\Omega\phi^\varepsilon_1\left(\Delta u+\mu \Psi_\varepsilon(x)u+f\right)dx
=\int_\Omega\left(\Delta\phi^\varepsilon_1+\mu \Psi_\varepsilon(x)\phi^\varepsilon_1(x)\right)udx+\int_\Omega f\phi_1^\varepsilon dx
\\
&=-\lambda^\varepsilon_1\int_\Omega\phi^\varepsilon_1udx+\int_\Omega f\phi_1^\varepsilon dx=-\lambda^\varepsilon_1a(t)+b(t).
\end{array}
$$
So $a(t)$ satisfies the problem
$$
a'(t)+\lambda^\varepsilon_1a(t)=b(t), \ \ a(0)=1.
$$
Hence
$
a(t)=e^{-\lambda^\varepsilon_1t}+\int_0^te^{-\lambda^\varepsilon_1(t-s)}b(s)ds
$
and
\begin{equation}
\label{eq25}\begin{array}{ll}
&\int_{(0,T)\times\Omega}u^2(t,x)dxdt\geq\int_0^Ta^2(t)dt
\\
&\geq \frac{1}{2}\int_0^Te^{-2\lambda^\varepsilon_1t}dt-\int_0^T\left(\int_0^te^{-\lambda^\varepsilon_1(t-s)}b(s)ds\right)^2dt
\\
&\geq-\frac{1}{4\lambda^\varepsilon_1}\left(e^{-2\lambda^\varepsilon_1T}-1\right)+\frac{1}{2\lambda^\varepsilon_1}\int_0^T\left(e^{-2\lambda^\varepsilon_1t}-1\right)\int_0^tb^2(s)dsdt
\\
&\geq-\frac{1}{4\lambda^\varepsilon_1}\left(e^{-2\lambda^\varepsilon_1T}-1\right)+\frac{1}{2\lambda^\varepsilon_1}\int_0^Te^{-2\lambda^\varepsilon_1t}dt\int_0^Tb^2(s)ds
\\
&=-\frac{1}{4\lambda^\varepsilon_1}\left(e^{-2\lambda^\varepsilon_1T}-1\right)-\frac{e^{-2\lambda^\varepsilon_1T}-1}{4(\lambda^\varepsilon_1)^2}\int_0^Tb^2(s)ds.
\end{array}
\end{equation}
Since
$
b^2(t)\leq\|f\|_{H^{-1}(\omega)}\|\phi_1^\varepsilon\|_{H^1(\omega)}, \ \ \omega\Subset\Omega\backslash\{0\}
$
we get from (\ref{eq25})
$$
\begin{array}{ll}
&-\frac{e^{-2\lambda^\varepsilon_1T}-1}{4\lambda^\varepsilon_1}\leq\int_{(0,T)\times\Omega}u^2(t,x)dxdt
+\frac{e^{-2\lambda^\varepsilon_1T}-1}{4(\lambda^\varepsilon_1)^2}\|\phi^\varepsilon_1\|_{H^1(\omega)}\int_0^T\|f(t,.)\|_{H^{-1}(\Omega)}dt.
\end{array}
$$
Therefore either
$$
-\frac{e^{-2\lambda^\varepsilon_1T}-1)}{8\lambda^\varepsilon_1}
\leq\frac{e^{-2\lambda^\varepsilon_1T}-1}{4(\lambda^\varepsilon_1)^2}\|\phi^\varepsilon_1\|_{H^1(\omega)}\int_0^T\|f(t,.)\|_{H^{-1}(\Omega)}dt,
$$
or
$$
-\frac{e^{-2\lambda^\varepsilon_1T}-1}{8\lambda^\varepsilon_1}\leq\int_{(0,T)\times\Omega}u^2(t,x)dxdt.
$$
In any case we have for every $f$ localized in $\omega\Subset\Omega\backslash\{0\}$ the estimate
$$
J^\varepsilon_{u_0}(f)\geq\inf\left\{-\frac{e^{-2\lambda^\varepsilon_1T}-1}{16\lambda^\varepsilon_1}, \frac{-\lambda^\varepsilon_1}{4\|\phi^\varepsilon_1\|_{H^1(\omega)}}\right\}
$$
holds.

From Proposition \ref{prop1}, if $\omega\subset U_{\rho,\delta}=\{\rho<|x|<(1-\delta)\varphi(x)\}$  for some positive constants $\rho, \delta$, it follows that
$$
\lim_{\varepsilon\rightarrow0}\|\phi^\varepsilon_1\|_{U_{\rho,\delta}}=0,
$$
and hence $\lim_{\varepsilon\rightarrow0}J^\varepsilon_{u_0}(f)=\infty$ which proves  Theorem \ref{th2}.
\end{proof}
\textbf{Acknowledgement}
\small{The work is  partially supported by   the Grant No BG05M2OP001--1.001--0003, financed by the Science and Education for Smart Growth Operational Program (2014-2020) in Bulgaria and co-financed by the European Union through the European Structural and Investment Funds.}


\begin{flushleft}
Institute of Mathematics and  Informatics,\\ Bulgarian Academy
of Sciences \\ Acad. G. Bonchev str.,bl. 8\\
Sofia 1113, Bulgaria, \\ E-mail: kutev@math.bas.bg; rangelov@math.bas.bg,
\end{flushleft}

\end{document}